\def\serieslogo@{}
\def\@setcopyright{}
\def\grad{\nabla}
\def\poly{\mathcal{P}} 
\def\v{{\mathbf v}}
\def\vo{{{\mathbf v}_0}}
\def\x{{\mathbf x}}
\def\xo{{{\mathbf x}_0}}
\def\y{{\mathbf y}}
\def\p{p}
\newtheorem{thm}{Theorem}[section]
\newtheorem{prop}[thm]{Proposition}
\theoremstyle{assumption}
\theoremstyle{definition}
\newtheorem{defn}{Definition}[section]
\newtheorem{rem}{Remark}[section]
\numberwithin{equation}{section}
\numberwithin{thm}{section}
\numberwithin{rem}{section}
\numberwithin{defn}{section}
\numberwithin{assumption}{section}
\begin{document}
\footnotetext{{\it Date:} \today.

{\it 2000 Mathematics Subject Classification:} 65M60, 92C45.

{\it Keywords:} Flocking, kinetic equations, clusters,
$\delta$-singularity, discontinuous Galerkin method, positivity
preserving.

{\bf Acknowledgment.} This work is supported by NSF grants RNMS11-07444
(KI-Net).}

\title[A DG method on kinetic flocking models]
{A discontinuous Galerkin method on kinetic flocking models} 

\author{Changhui Tan}
\address{Changhui Tan\\
Center of Scientific Computation and Mathematical Modeling (CSCAMM)\\
University of Maryland \\
College Park, MD, 20742-4015\\
USA}
\email{ctan@cscamm.umd.edu}

\setcounter{page}{1}

\begin{abstract}
We study kinetic representations of flocking models. They arise from
agent-based models for self-organized dynamics, such as
Cucker-Smale \cite{CuS} and Motsch-Tadmor \cite{MT} models. 
We prove flocking behavior for the kinetic descriptions of flocking
systems, which indicates a concentration in velocity variable in infinite
time. We propose a discontinuous Galerkin method to treat the
asymptotic $\delta$-singularity, and construct
high order positive preserving scheme to solve kinetic flocking systems. 
\end{abstract}

\maketitle
\vspace*{-0.8cm}
\tableofcontents

\section{Introduction}

We are concern with the following Vlasov-type kinetic equation
\begin{subequations}\label{eq:main}
\begin{equation}
\partial_tf+\v\cdot\grad_\x f+\grad_\v\cdot Q(f,f)=0,
\end{equation}
where $f=f(t,\x,\v)$ represents number density, and
the binary interaction $Q(f,f)$ is non-local in space, which is
expressed in the form
\begin{equation}
Q(f,f)=fL[f],\quad L[f](t,\x,\v)=\iint
\frac{\phi(|\x-\y|)}{\Phi(t,\x)} (\v^*-\v)f(t,\y,\v^*)d\y d\v^*.
\end{equation} 
\end{subequations}

This system arises as a mean-field kinetic description of agent-based
self-organized dynamics
\[\displaystyle\dot{\x}_i=\v_i,\quad
\dot{\v}_i=\sum_{j=1}^N a(\x_i,\x_j)(\v_j-\v_i).\]
It discribes the behavior that agents align with their
neighbors and self-organize to finite many clusters,
through an interaction law characterized by a kernel
$a(\cdot,\cdot)$. In particular, it reveals the novel \emph{flocking
  phenomenon} where agents, \textit{e.g.} birds, fishes, 
 organize into an ordered motion and flock into one cluster.

A pioneering work on flocking dynamics is due to Cucker and
Smale (CS)
in \cite{CuS}, where they propose a symmetric interaction kernel
\[a(\x_i,\x_j)=\frac{\phi(|\x_i-\x_j|)}{N}.\]
Here, $\phi$ is called \emph{influence function}, which characterizes
the influences between two agents. It is natural to assume that the
strength of interaction is determined by the physical distance
between agents: larger distance implies weaker influence.
Hence, we assume that
$\phi=\phi(r)$ is a bounded decreasing function on $[0,\infty)$.
Without loss of generality, we set $\phi(0)=1$ throughout the paper.

In particular, if $\phi$
decreases slow enough at infinity, namely
\begin{equation}\label{eq:phi}
\int^\infty\phi(r)dr=\infty,
\end{equation}
CS system enjoys unconditionally flocking property: all agents
tend to have the same asymptotic velocity, regardless of initial
configurations, consult \textit{e.g.} \cite{HL}. 

Another celebrated model is proposed by Motsch and Tadmor (MT)
in \cite{MT}, with interaction kernel
\[a(\x_i,\x_j)=\frac{\phi(|\x_i-\x_j|)}{\Phi_i},\quad
\Phi_i=\sum_{j=1}^N\phi(|\x_i-\x_j|).\]
With the new normalization by $\Phi_i$ as opposed to $N$ in CS system,
MT model performs better in the far-from-equilibrium scenario, consult
\cite{MT} and section \ref{sec:vs} below. Despite the fact that the interaction
kernel is asymmetric, and momentum is not conserved, it is proved in
\cite{MT} that MT system has
unconditional flocking property, under the same assumption
\eqref{eq:phi} on the influence function.

When number of agents $N$ becomes large, it is more convenient to
study the associated kinetic mean-field representation \eqref{eq:main}, which is
formally derived in \cite{HT,MT}. For CS and MT models, the
normalization factor $\Phi$ takes the form
\[\Phi(t,\x)\begin{cases}
\equiv m&\text{for CS model}\\
=\displaystyle\iint\phi(|\x-\y|)f(t,\y,\v)d\y d\v&\text{for MT model}
\end{cases},\]
where $m:=\displaystyle\iint f(t,\y,\v)d\y d\v$ is the total mass
which is conserved in time.

\medskip

The goal of this paper is to study these two flocking models in
kinetic level. Our first result, stated in theorem \ref{thm:flocking}, shows global
existence of classical solution to the main system \eqref{eq:main}, as well as
the long time behavior of the solution:
unconditional flocking under assumption \eqref{eq:phi}. For CS system, the result is 
well-established in \cite{CFRT,HT}. We give an alternative proof
for both CS and MT systems, employing the idea of \cite{MT} in
analogy with the agent-based models. Similar argument can be made for
hydrodynamic flocking models as well, consult \cite{TT}.

Our second main result concerns with numerical implementation of
system \eqref{eq:main}. Despite being smooth for all finite time, the
asymptotic behavior of the solution is the formation of clusters, and
in particular, flocking, under assumption \eqref{eq:phi}.
This implies concentrations in $\v$ as time approaches
infinity. Such $\delta$-singularity is addressed in many systems, from
finite-time concentration in aggregation systems {\it e.g.} \cite{BCL}, to formation of
$\delta$-shocks in Euler equations {\it e.g.} \cite{CL}.

In particular, there are lots of development on numerical
implementation of kinetic systems with singularities of different
types. We refer readers to a recent review
\cite{DP} and references therein. 
Many techniques use smooth approximations for the singularity. 
They suffer large errors as the solution becomes more and more singular. 
For instance, spectral
method is widely used to solve kinetic systems. It is very
accurate and efficient (especially for our system as it has a
convolution structure). However, when solution becomes singular, the
method is unstable, due to Gibbs phenomenon.

We design a discontinuous Galerkin (DG) method to solve the flocking systems
numerically. Discontinuous Galerkin methods are first introduced by
Reed and Hill in \cite{RH} and has many succesful applications in hyperbolic
conservation laws. The idea is to use piecewise
polynomials to approximate the solution in the weak sense. The use of
weak formulation of the solution overcomes the inaccuracy of the
scheme. Moreover, we prove in theorem \ref{thm:pp} that our scheme is
stable, under an appropriate limiter \cite{SZ}.
The efficiency of DG method on $\delta$-singlarity has been
studied in \cite{YS} and more applications are discussed in
\cite{YWS}.
\medskip

The rest of the paper is organized as follows. We first prove flocking
properties for the main system \eqref{eq:main} in section
\ref{sec:flock}. The numerical implementation for the system is developed
in section \ref{sec:DG}. We design DG schemes of second, third or
higher in $v$, and prove $L^1$-stability of the schemes.
Some examples are provided in section
\ref{sec:example} to demonstrate the good performance of our high
order DG schemes, for capturing flocking, as well as clustering
phenomena. In particular, we compare CS and MT setups under a
far-from-equilibrium initial configuration. As addressed in \cite{MT},
MT model has a better performance, in the sense of converging to the
expected flock.

\section{Kinetic description of flocking models}\label{sec:flock}
To illustrate flocking in kinetic level, we first define the total
variation in position $x$ and velocity $v$:
\[S(t):=\sup_{(x,v),(y,v^*)\in\text{supp}f(t)}|x-y|,\quad
V(t):=\sup_{(x,v),(y,v^*)\in\text{supp}f(t)}|v-v^*|.\]

Flocking can be represented using the following definition. There are two
key aspects included: agents tend to have the same velocity as others,
they won't go apart in large time.
\begin{defn}[Kinetic flocking]
We say a solution $f(t,x,v)$ converges to a flock in the kinetic
level, if $S(t)$ remains bounded in all time, and $V(t)$ decays to
0 asymptotically, namely,
\[S(t)\leq D, ~~\forall t\geq0;\quad V(t)\to0 \text{ as } t\to\infty.\]
\end{defn}

We prove the flocking property of both Cucker-Smale and Motsch-Tadmor
model in the kinetic level. 
\begin{thm}[Unconditional flocking]\label{thm:flocking}
Consider kinetic flocking system \eqref{eq:main} with CS or
MT setup. Suppose the influence function $\phi$ satisfies 
\eqref{eq:phi}. Then, for any initial profile $f_0\in C^1\cap W^{1,\infty}$, there exists a
unique strong solution of the system in all time and the solution
converges to a flock.
\end{thm}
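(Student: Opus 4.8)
The plan is to run the analysis entirely along characteristics and to close a pair of dissipative differential inequalities for the diameters $S(t)$ and $V(t)$, adapting to the kinetic setting the agent-based arguments of \cite{MT,HL}. Writing \eqref{eq:main} in characteristic form, the trajectories solve
\[
\dot{\x}(t)=\v(t),\qquad \dot{\v}(t)=L[f]\bigl(t,\x(t),\v(t)\bigr),
\]
and, since the equation is in conservative form, $\mathrm{supp}\,f(t)$ is exactly the image of $\mathrm{supp}\,f_0$ under this flow. As $\phi$ is positive, decreasing and satisfies \eqref{eq:phi}, it never vanishes; for the MT setup this gives the lower bound $\Phi(t,\x)\ge\phi(S(t))\,m>0$ whenever $S(t)<\infty$, so that $L[f]$ stays well defined and Lipschitz. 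I would first obtain local-in-time existence and uniqueness of a compactly supported $C^1\cap W^{1,\infty}$ solution by a fixed-point argument on the flow map, thereby reducing global existence to an a priori bound that confines $\mathrm{supp}\,f(t)$ to a fixed compact set.

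The core is the pair of inequalities. Comparing two trajectories gives $\frac{d}{dt}|\x_1-\x_2|\le|\v_1-\v_2|\le V(t)$, hence, as an upper Dini derivative of the supremum, $\frac{d^+}{dt}S(t)\le V(t)$. For the velocity diameter I would use that in both setups
\[
L[f](t,\x,\v)=\bar\v(t,\x)-\v,\qquad \bar\v(t,\x)=\frac{\iint\phi(|\x-\y|)\,\v^*f\,d\y\,d\v^*}{\iint\phi(|\x-\y|)f\,d\y\,d\v^*},
\]
with $\Phi\equiv m$ for CS, so that $\bar\v$ is a convex average of the velocities in the support. Fixing a unit vector $\mathbf{e}$ and following the two trajectories that realize the maximum and the minimum of $\mathbf{e}\cdot\v$, the averaging structure immediately gives that the maximum is non-increasing and the minimum non-decreasing. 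The decay rate then comes from bounding $\phi(|\x-\y|)\ge\phi(S(t))$ in the numerators and $\Phi\le m$ (i.e. $\phi\le1$) in the denominators; the key bookkeeping is that the two resulting nonnegative gaps sum to $m\,V(t)$, which yields the clean, mass-independent estimate $\frac{d^+}{dt}V(t)\le-\phi(S(t))\,V(t)$.

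With $\frac{d^+}{dt}S\le V$ and $\frac{d^+}{dt}V\le-\phi(S)V$ in hand, I would close the argument via the Lyapunov functional $\mathcal{E}(t)=V(t)+\int_0^{S(t)}\phi(r)\,dr$, which is non-increasing. Thus $\int_0^{S(t)}\phi(r)\,dr\le\mathcal{E}(0)$ for all $t$, and since \eqref{eq:phi} makes $s\mapsto\int_0^s\phi$ diverge, $S(t)$ is forced below some finite $D$. Feeding $\phi(S(t))\ge\phi(D)>0$ back in and applying Gr\"onwall yields the exponential decay $V(t)\le V(0)e^{-\phi(D)t}\to0$, which is exactly kinetic flocking; the uniform support bound simultaneously completes the continuation to a global solution. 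I expect the main obstacle to be precisely the velocity-diameter decay for MT: the kernel is asymmetric and $\Phi$ is itself a functional of $f$, so one cannot test against $\v_1-\v_2$ as in the symmetric CS case. Tracking the extremal projections $\mathbf{e}\cdot\v$ at both ends simultaneously, and exploiting that their gaps sum to $m\,V$, is what absorbs the asymmetry and the loss of momentum conservation; a secondary technical point is justifying the Dini-derivative inequalities for the suprema $S,V$ over a moving compact support via an envelope argument and the Lipschitz dependence of the flow.
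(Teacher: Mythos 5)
Your proposal is correct, but it reaches the key decay inequality \eqref{eq:dV} by a genuinely different route than the paper. The paper (proof of the decay estimates, Step 3) constructs an auxiliary kernel $b(t,\x,\v,\y,\v^*)$ --- the alignment kernel augmented by a Dirac mass so that it becomes a probability measure in $(\y,\v^*)$ --- verifies properties (P1)--(P3), subtracts a common lower bound $\eta$ of total mass $\phi(S)$, and uses a convex-hull argument on the normalized remainder $\hat b$ to obtain the pointwise vector-valued estimate \eqref{eq:flockkey} for \emph{arbitrary} pairs of characteristics; this is a kinetic Dobrushin-type contraction in the spirit of the agent-based argument of \cite{MT}, and it treats CS and MT in one unified computation. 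You instead track the extremal directional projections $\mathbf{e}\cdot\v$ along characteristics and exploit sign-definiteness of the integrand at the two extremes, so that the bounds $\Phi(\x)\leq m$ and $\phi\geq\phi(S)$ both cut in the favorable direction and the two gap integrals telescope to $m$ times the directional width; this is the Ha--Liu style argument for CS, and your observation that the sign structure absorbs the MT asymmetry (no testing against $\v_1-\v_2$ needed) is correct --- I checked the computation and it closes. Once \eqref{eq:dX}--\eqref{eq:dV} are in hand, your Lyapunov functional $V(t)+\int_0^{S(t)}\phi(r)\,dr$ is exactly the mechanism behind the paper's theorem on flocking with fast alignment, yielding the same $D=\psi^{-1}(V_0+\psi(S_0))$ and the exponential decay of $V$. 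What each approach buys: the paper's kernel argument gives the estimate for all pairs (not only extremal ones) and generalizes to any normalization satisfying (P1)--(P3); yours is more elementary, avoids the Dirac-corrected kernel entirely, and makes the mechanism of the asymmetric case transparent.

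Two caveats. First, your displayed identity $L[f]=\bar\v-\v$ is wrong for CS: with $\bar\v$ normalized by $\Phi(\x)=\iint\phi(|\x-\y|)f\,d\y\,d\v^*$, one has $L[f]=\frac{\Phi(\x)}{m}(\bar\v-\v)$ under the CS setup, and the prefactor $\Phi/m\in[\phi(S),1]$ is not $1$. This is harmless --- the prefactor is positive so the maximum principle survives, and your subsequent projection estimate goes through verbatim --- but it should be stated correctly. Second, the existence portion is sketchier than the paper's proposition \ref{prop:classical}: confinement of $\mathrm{supp}\,f(t)$ alone is not the continuation criterion for strong solutions; one must propagate the $W^{1,\infty}$ bound on $f$, which requires bounding $\grad_\x L[f]$ and $\grad_\x\partial_\v L[f]$ along characteristics. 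For MT these bounds degenerate with the lower bound on $\Phi$, so the flocking estimate $S(t)\leq D$ (hence $\Phi\geq m\,\phi(D)$) must be fed back into the regularity argument --- exactly as the paper does with the bound $2m\|\phi\|_{\dot W^{1,\infty}}V_0/\phi(D)$. You have the ingredients ($\Phi\geq\phi(S)m$ and the a priori bound on $S$), but the loop between the flocking estimate and global regularity needs to be closed explicitly rather than delegated to ``an a priori bound on the support.''
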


First, we claim that all $C^1$ solutions converges to a flock. With
the assumption of smoothness, we are able to study the characteristic
paths. The following decay estimates play an important rule toward the
proof of flocking.

\begin{prop}[Decay estimates of flocks] Suppose $f$ is the strong solution of
  the system \eqref{eq:main}. Then, 
\begin{subequations}\label{eq:flock}
\begin{align}
\frac{d}{dt} S(t)&\leq V(t),\label{eq:dX}\\
\frac{d}{dt} V(t)&\leq -\phi(S(t)) V(t).\label{eq:dV}
\end{align}
\end{subequations}
\end{prop}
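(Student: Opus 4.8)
The plan is to exploit that $f$ is a strong ($C^1$) solution, so we may write the equation in non-conservative form
\[
\partial_t f+\v\cdot\grad_\x f+L[f]\cdot\grad_\v f=-f\,\grad_\v\cdot L[f]
\]
and read off the characteristic field $\dot\x=\v$, $\dot\v=L[f](t,\x,\v)$. Along characteristics $f$ solves the linear ODE $\dot f=-f\,\grad_\v\cdot L[f]$, so $\mathrm{supp}\,f(t)$ is transported exactly by the flow; hence the suprema defining $S(t),V(t)$ may be taken over characteristic pairs. Two bounds, valid on the support, drive everything: since $\phi$ is decreasing with $\phi(0)=1$ and all pairwise distances are $\leq S(t)$, we have $\phi(S(t))\leq\phi(|\x-\y|)\leq 1$; and since $\phi\leq1$, the normalization satisfies $0<\Phi(t,\x)\leq m$, with equality $\Phi\equiv m$ in the CS case. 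These let me treat CS and MT in one computation.

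For \eqref{eq:dX}, I would pick a pair of spatial points realizing $S(t)$ and follow their characteristics $\x_1(t),\x_2(t)$. Since $\dot\x_i=\v_i$ with $(\x_i,\v_i)\in\mathrm{supp}\,f$,
\[
\frac{d}{dt}|\x_1-\x_2|\leq|\dot\x_1-\dot\x_2|=|\v_1-\v_2|\leq V(t),
\]
and passing to the upper Dini derivative of the supremum gives \eqref{eq:dX}.

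For \eqref{eq:dV}, the heart of the argument, I pick a maximizing velocity pair $(\x_1,\v_1),(\x_2,\v_2)$ with $|\v_1-\v_2|=V(t)$ and let $\mathbf{e}$ be the unit vector along $\v_1-\v_2$. Extremality forces $\v_2\cdot\mathbf{e}\leq\v^*\cdot\mathbf{e}\leq\v_1\cdot\mathbf{e}$ for every $(\y,\v^*)\in\mathrm{supp}\,f$ (otherwise some pair would exceed $V$), so $w:=(\v^*-\v_2)\cdot\mathbf{e}\in[0,V]$. Writing $\bar w:=\tfrac1m\iint w\,f\,d\y\,d\v^*$, I bound the two forcing terms by using $\phi(|\x-\y|)\geq\phi(S)$ and $0<\Phi\leq m$: since $(\v^*-\v_1)\cdot\mathbf{e}=w-V\leq0$,
\[
L[f](\x_1,\v_1)\cdot\mathbf{e}=\frac{1}{\Phi(\x_1)}\iint\phi(|\x_1-\y|)(w-V)f\,d\y\,d\v^*\leq\phi(S)(\bar w-V),
\]
while, since $(\v^*-\v_2)\cdot\mathbf{e}=w\geq0$,
\[
L[f](\x_2,\v_2)\cdot\mathbf{e}=\frac{1}{\Phi(\x_2)}\iint\phi(|\x_2-\y|)w\,f\,d\y\,d\v^*\geq\phi(S)\bar w.
\]
Subtracting and using the envelope/Danskin principle to freeze $\mathbf{e}$ and the maximizing pair,
\[
\frac{d}{dt}V\leq\bigl(L[f](\x_1,\v_1)-L[f](\x_2,\v_2)\bigr)\cdot\mathbf{e}\leq\phi(S)(\bar w-V)-\phi(S)\bar w=-\phi(S)V,
\]
which is \eqref{eq:dV}.

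The main obstacle is not the algebra above but the rigorous differentiation of the suprema $S(t),V(t)$: one must justify that the supremum is attained (propagation of compact support under the smooth characteristic flow) and invoke the standard Dini-derivative/envelope lemma allowing the maximizing pair and the direction $\mathbf{e}$ to be held fixed at the instant of differentiation. The remaining care is bookkeeping the normalization uniformly, ensuring each of the two estimates above holds verbatim for both CS ($\Phi\equiv m$) and MT ($0<\Phi\leq m$), which is exactly where the monotonicity of $\phi$ and the crude bound $\Phi\leq m$ are used.
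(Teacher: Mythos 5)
Your proof is correct, and for the key inequality \eqref{eq:dV} it takes a genuinely different route from the paper. The paper proves the pointwise estimate \eqref{eq:flockkey} for \emph{arbitrary} pairs in the support of $f$: it rewrites $L[f]$ as an average against a kernel $b$, corrected by a Dirac term at the diagonal so that $b$ becomes a probability measure (properties (P1)--(P2)), extracts the uniform lower bound $\eta(\v^*)=\frac{\phi(S)}{m}\int f(\y,\v^*)d\y$ (property (P3)), and concludes by convexity: the residual normalized kernel $\hat b$ averages $\v^*$ into the convex hull of the velocity support, producing the $(1-\phi(S))V$ term, after which one lets $|\v-\v^*|\to V$. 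You instead work only at a maximizing pair, project onto the extremal direction $\mathbf{e}$, use the slab property $0\leq(\v^*-\v_2)\cdot\mathbf{e}\leq V$ forced by extremality of the pair, and exploit the sign-definiteness of the integrands: since $w-V\leq 0$ and $w\geq 0$ on the support, the replacements $\phi(|\x_i-\y|)\to\phi(S)$ and $m/\Phi\to 1$ move each integral in the favorable direction, and the mean $\bar w$ cancels upon subtraction, giving $-\phi(S)V$ directly. This is essentially the Ha--Liu-style particle argument transplanted to the kinetic setting; it is more elementary and dispenses with the kernel construction entirely, but it hinges on the supremum $V(t)$ being attained (compactness of the transported support, or else an $\epsilon$-maximizer argument with $O(\epsilon)$ errors and Dini derivatives), a technical point you correctly flag, whereas the paper's estimate holds for every pair and passes to the supremum afterwards. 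Both routes handle CS and MT uniformly --- the paper via the delta correction that makes $b$ stochastic even in the CS case, you via the crude bounds $\phi(S)\leq\phi\leq 1$ and $0<\Phi\leq m$ applied to sign-definite integrals --- and both yield the identical decay rates in \eqref{eq:dX}--\eqref{eq:dV}.
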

\begin{proof}
The characteristic curve of the system reads $(\x(t),\v(t))$ where
\[\frac{d}{dt}\x(t)=\v(t),\quad\frac{d}{dt}v(t)=L[f](t,\x(t),\v(t)).\]
We consider two characteristics $(\x(t),\v(t))$ and
$(\y(t),\v^*(t))$, both starting inside the support of $f_0$.
To simplify the notations, we omit the time variable throughout the
proof, unless necessary.

\medskip{\bf Step 1: proof of \eqref{eq:dX}.} Compute
\[\frac{d}{dt}|\x-\y|^2=2\langle \x-\y,\v-\v^*\rangle\leq 2SV.\]

By taking the supreme of the left hand side among all $\x, \y$, the
inequality yields \eqref{eq:dX}.

\medskip{\bf Step 2: proof of \eqref{eq:dV}.} Similar with step 1, compute
\[\frac{d}{dt}|\v-\v^*|^2=2\big\langle \v-\v^*,L[f](\x,\v)-L[f](\y,\v^*)\big\rangle.\]

We claim the following key estimate
\begin{equation}\label{eq:flockkey}
L[f](\x,\v)-L[f](\y,\v^*)\leq (1-\phi(S))V-(\v-\v^*),
\end{equation}
for all $(\x,\v), (\y,\v^*)$ in the support of $f$. It yields
\[\frac{d}{dt}|\v-\v^*|^2\leq 2(1-\phi(S))|\v-\v^*|V-2|\v-\v^*|^2.\]
Take $\v,\v^*$ where $|\v-\v^*|\to V$, we end up with \eqref{eq:dV}.

\medskip{\bf Step 3: proof of the key estimate \eqref{eq:flockkey}.}
Given any pairs $(\x,\v)$ and $(\y,\v^*)$ inside the support of $f$, define 
\[b(t,\x,\v,\y,\v^*):=\frac{\phi(|\x-\y|)f(t,\y,\v^*)}{\Phi(t,\x)}+
\left(1-\iint\frac{\phi(|\x-\y|)}{\Phi(t,\x)}f(t,\y,\v)d\y
  d\v\right)\delta_0(\x-\y)\delta_0(\v-\v^*),\]
where $\delta_0$ is the Dirac delta at the origin. Such function $b$
enjoys the following properties
\begin{itemize}
\item[(P1)] $\displaystyle\iint b(t,\x,\v,\y,\v^*)d\y d\v^*=1$, for all $t$,
\item[(P2)] $\displaystyle\iint b(t,\x,\v,\y,\v^*)(\v^*-\v)d\y d\v^*=L[f](t,\x,\v)$,
\item[(P3)] There exists a function $\eta(t,\v^*)$ such that 
  \begin{itemize}
    \item $\displaystyle\int b(t,\x,\v,\y,\v^*)dy\geq \eta(t,\v^*)$ for all $t, \x$ and $\v$,
    \item $\displaystyle\int\eta(t,\v^*)d\v^*=\phi(S(t))>0$, for all $t$.
  \end{itemize}
\end{itemize}

It is worth noting that the second term in $b$ is 0 under MT setup.
For CS setup, it is a positive delta measure. The sole purpose of
adding this term is to satisfy (P1). Hence, the main ingredient of $b$
is the first term.

The first two properties (P1) and (P2) are easy to check. Details are
left to readers. For (P3), a valid choice of $\eta$ is 
\[\eta(\v^*)=\frac{\phi(S)}{m}\int f(\y,\v^*)d\y.\]
With this $\eta$, we check the first condition
\[\int b(\x,\v,\y,\v^*)d\y=\int\frac{\phi(|\x-\y|)}{\Phi(\x)}
f(\y,\v^*)d\y\geq \frac{\phi(S)}{m}\int f(\y,\v^*)d\y
=\eta(\v^*),\]
thanks to the 
decreasing property of $\phi$ and the universal assumption
of $\phi(0)=1$, which indicates $\Phi(\x)\leq m$ under both setups.
The second condition is straightforward.

We are ready to prove estimate \eqref{eq:flockkey}. Take $(\x_1,\v_1),
(\x_2,\v_2)$ two characteristics inside the support of $f$. Compute
\begin{align*}
&\quad L[f](\x_1,\v_1)-L[f](\x_2,\v_2)\\
&~\stackrel{(P2)}{=}~\iint\left[b(\x_1,\v_1,\y,\v^*)(\v^*-\v_1)-b(\x_2,\v_2,\y,\v^*)
(\v^*-\v_2)\right]d\y d\v^*\\
&~\stackrel{(P1)}{=}~\iint\left(b(\x_1,\v_1,\y,\v^*)-b(\x_2,\v_2,\y,\v^*)\right)\v^*d\y
d\v^*-(\v_1-\v_2)\\
&\ \ = \ \ \left[\int\left(\int b(\x_1,\v_1,\y,\v^*)d\y-\eta(\v^*)\right)d\v^*
-\int\left(\int b(\x_2,\v_2,\y,\v^*)d\y-\eta(\v^*)\right)d\v^*\right]-(\v_1-\v_2)\\
&\ \ = \ \ (1-\phi(S))\left[\int\hat{b}(\x_1,\v_1,\v^*)\v^*d\v^*
-\int\hat{b}(\x_2,\v_2,\v^*)d\v^*\right]-(\v_1-\v_2).
\end{align*}
Here, $\hat{b}$ is defined as $\hat{b}(\x,\v,\v^*)=\displaystyle\frac{\int
b(\x,\v,\y,\v^*)d\y-\eta(\v^*)}{1-\phi(S)}$. From (P1) and (P3), we know $\hat{b}$ is
positive, supported inside the support of $f$ in $\v$, and $\displaystyle\int
\hat{b}(\x,\v,\v^*)d\v^*=1$ for all $(\x,\v)$. Therefore, $\displaystyle
\int\hat{b}(\x,\v,\v^*)\v^*d\v^*$ lies inside the convex envelope of the
support of $f$ in $\v$. Hence, 
\[\left|\int\hat{b}(\x_1,\v_1,\v^*)\v^*d\v^*-\int\hat{b}(\x_2,\v_2,\v^*)d\v^*\right|\leq
V,\]
and \eqref{eq:flockkey} holds.
\end{proof}

With the decay estimates, we are able to show that the solution of
system \eqref{eq:main} converges to a flock under suitable assumptions
on the influence function.

\begin{thm}[Flock with fast alignment]\label{thm:flock}
Let $f$ be the solution of system \eqref{eq:main}, with initial data
$f_0$ compactly supported, i.e
\[S_0<+\infty \quad\text{and}\quad V_0<+\infty.\]
If the influence function $\phi$ decays sufficiently slow 
\begin{equation}\label{eq:phidecay}
\int_{S_0}^\infty\phi(r)dr>V_0,
\end{equation}
then, $f$ converges to a flock with fast alignment, namely,
there exists a finite number $D$, defined as
\begin{equation}\label{eq:D}
D:=\psi^{-1}(V_0+\psi(S_0)),~~\text{where}~\psi(t)=\int_0^t\psi(s)ds,
\end{equation}
such that
\[\sup_{t\geq 0}S(t)\leq D,\quad V(t)\leq V_0 e^{-\phi(D)t}.\]
\end{thm}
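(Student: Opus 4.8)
The plan is to collapse the two differential inequalities \eqref{eq:dX}--\eqref{eq:dV} into a single monotone Lyapunov functional, in the spirit of the Cucker--Smale analysis. I would introduce
\[
\mathcal{L}(t) := V(t) + \psi(S(t)), \qquad \psi(s) = \int_0^s \phi(r)\,dr,
\]
where $\psi$ is the antiderivative appearing in \eqref{eq:D}. Since $\phi$ is nonnegative and decreasing with $\phi(0)=1$, the function $\psi$ is nondecreasing, and strictly increasing on any interval where $\phi>0$; this monotonicity is what will later let me invert it.

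First I would show that $\mathcal{L}$ is non-increasing along the flow. Multiplying \eqref{eq:dX} by $\phi(S)\geq 0$ gives $\frac{d}{dt}\psi(S(t)) = \phi(S(t))\frac{d}{dt}S(t) \leq \phi(S(t))V(t)$, while \eqref{eq:dV} reads $\frac{d}{dt}V(t)\leq -\phi(S(t))V(t)$. Adding the two, the $\phi(S)V$ terms cancel, so that
\[
\frac{d}{dt}\mathcal{L}(t) \leq 0, \qquad\text{hence}\qquad V(t)+\psi(S(t)) \leq V_0 + \psi(S_0) \quad\text{for all } t\geq 0.
\]

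The second step is the uniform bound on $S$. Dropping the nonnegative term $V(t)$ from the conserved quantity yields $\psi(S(t))\leq V_0+\psi(S_0)$. This is exactly where the fast-alignment hypothesis \eqref{eq:phidecay} enters: it states precisely that $V_0+\psi(S_0) < \psi(\infty)$, so the level $V_0+\psi(S_0)$ lies strictly below the supremum of $\psi$ and therefore in the range of $\psi$ over a bounded interval. Inverting the strictly increasing $\psi$ produces the finite threshold $D = \psi^{-1}(V_0+\psi(S_0))$ of \eqref{eq:D}, and consequently $S(t)\leq D$ for all $t\geq 0$. I would also record that $D$ lies in the region where $\phi$ is strictly positive: even if $\phi$ vanishes beyond some radius $r_0$, the strict inequality $V_0+\psi(S_0)<\psi(\infty)=\psi(r_0)$ forces $D<r_0$, so $\phi(D)>0$.

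Finally, feeding this uniform bound back into \eqref{eq:dV} closes the argument: since $\phi$ is decreasing and $S(t)\leq D$, we have $\phi(S(t))\geq \phi(D)$, whence $\frac{d}{dt}V(t)\leq -\phi(D)V(t)$, and Gr\"onwall's inequality delivers the advertised rate $V(t)\leq V_0 e^{-\phi(D)t}$. Combined with $\sup_{t\geq 0}S(t)\leq D$, this is exactly the claimed flocking with fast alignment. The only genuinely delicate point is the second step --- ensuring $D$ is finite and that $\phi(D)>0$ --- which is entirely controlled by \eqref{eq:phidecay}; the monotonicity of the Lyapunov functional and the final Gr\"onwall estimate are then routine.
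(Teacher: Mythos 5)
Your proof is correct and is exactly the argument the paper intends: the paper's own ``proof'' is a deferral to \cite{HL} (and to the proof of Theorem 2.1 in \cite{TT}), whose argument is precisely your monotone Lyapunov functional $V(t)+\psi(S(t))$ built from the decay estimates \eqref{eq:dX}--\eqref{eq:dV}, with the hypothesis \eqref{eq:phidecay} guaranteeing that $D=\psi^{-1}(V_0+\psi(S_0))$ is finite with $\phi(D)>0$, followed by Gr\"onwall. You also correctly read $\psi$ as the antiderivative of $\phi$, which silently repairs the typo in \eqref{eq:D}, where the paper writes $\psi(t)=\int_0^t\psi(s)\,ds$ in place of $\psi(t)=\int_0^t\phi(s)\,ds$.
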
 
\begin{rem}
1.  The idea of the proof is followed from
\cite{HL}. Consult \cite[Proof of theorem 2.1]{TT} for more details.
 Note that $V(t)$ decays to zero exponentially fast. We call
this fast alignment.

2. Condition \eqref{eq:phidecay} is automatically satisfied if we assume
$\phi$ decays slow at infinity. In fact, with our assumption
\eqref{eq:phi} on $\phi$, \eqref{eq:phidecay} stays true for all
initial configurations with finite $S_0$ and $V_0$. Hence, we prove
\emph{unconditional flocking}.
\end{rem}

Next, we show $f\in C^1$ in all time. For Vlasov-type equations, the
proof is quite standard, see {\it e.g.} \cite{HT} for CS system.
\begin{prop}\label{prop:classical}
Consider \eqref{eq:main} with initial $f_0\in C^1\cap
W^{1,\infty}$. Then, there exists a unique solution $f\in C([0,T],
C^1\cap W^{1,\infty})$, for any time $T$.
\end{prop}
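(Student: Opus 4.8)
The plan is to view \eqref{eq:main} as a transport equation along its characteristics and to close a Picard iteration in $C([0,T];C^1\cap W^{1,\infty})$ over compactly supported densities. The guiding structural observation is that the field $L[f]$ is affine in $\v$ and a convolution in $\x$: writing
\[
L[f](t,\x,\v)=\u(t,\x)-c(t,\x)\,\v,\qquad \u=\iint\tfrac{\phi(|\x-\y|)}{\Phi(t,\x)}\v^*f\,d\y d\v^*,\quad c=\iint\tfrac{\phi(|\x-\y|)}{\Phi(t,\x)}f\,d\y d\v^*,
\]
one sees that $\grad_\v\cdot L[f]=-n\,c(t,\x)$ (with $n$ the velocity dimension) is bounded and $\v$-independent, while spatial derivatives of $L[f]$ may be moved onto $f$ by integrating by parts against $\phi(|\x-\y|)$. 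Thus $L[f]$ inherits the $\x$-regularity of $f$ and is smooth in $\v$, even though $\phi$ is merely continuous and decreasing. The denominator is controlled from below using flocking: by Theorem \ref{thm:flock} the support satisfies $S(t)\le D$ for all $t$, so $\phi(|\x-\y|)\ge\phi(D)$ on the support and hence $\Phi(t,\x)\ge\phi(D)\,m>0$ uniformly in time. This uniform lower bound is what makes $L[\cdot]$ well defined and locally Lipschitz.

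With these bounds in hand, I would set up the characteristic formulation $\dot\x=\v$, $\dot\v=L[f](t,\x,\v)$ as in the previous proof. Along trajectories the density obeys $\frac{d}{dt}f=-f\,\grad_\v\cdot L[f]$, giving the representation
\[
f\big(t,\x(t),\v(t)\big)=f_0(\xo,\vo)\exp\!\Big(-\!\int_0^t\grad_\v\cdot L[f]\,ds\Big),
\]
which immediately yields positivity preservation and, since the exponent is bounded, the $L^\infty$ bound $\|f(t)\|_\infty\le\|f_0\|_\infty e^{Ct}$. The flocking estimates simultaneously trap the support in a fixed compact set ($S(t)\le D$, $V(t)\le V_0$) for all time.

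Local existence and uniqueness would then follow from a standard fixed-point argument: define $\mathcal{T}:g\mapsto f$ by freezing the field as $L[g]$, solving the characteristic flow, and pushing $f_0$ forward. Using the uniform lower bound on $\Phi$ and the convolution smoothing, $g\mapsto L[g]$ is locally Lipschitz from $C^1\cap W^{1,\infty}$ into itself, so $\mathcal{T}$ is a contraction on a short interval $[0,T_*]$. Propagation of $C^1$ and $W^{1,\infty}$ bounds is obtained by differentiating the flow: the Jacobian of the characteristic map solves a linear variational ODE with coefficients bounded by $\|L[f]\|_{C^1}$, so Gronwall controls $\grad_{\x,\v}f$ on $[0,T_*]$. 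Because every a priori bound above is uniform in time — compact support from flocking, $\Phi\ge\phi(D)m$, and at most exponential growth of the flow derivatives — the local solution continues to any prescribed $T$, giving global existence.

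I expect the main obstacle to be the nonlocal normalized field rather than the transport itself. For the MT model $\Phi$ depends nonlinearly on $f$, and both its lower bound and the Lipschitz dependence of $L[\cdot]$ hinge on the uniform support control furnished by Theorem \ref{thm:flock}; likewise, propagating $C^1$ regularity requires verifying that the Gronwall constants governing the variational flow do not blow up in finite time. Once these two points are secured, the remaining estimates are the routine ones for Vlasov-type equations.
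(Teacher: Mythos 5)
Your overall architecture --- characteristics, the variational (Jacobian) equation, Gronwall, with the Picard/contraction scaffolding made explicit --- is the same as the paper's, which differentiates the characteristic flow, writes $\det J=\exp\bigl(\int_0^t\mathrm{tr}A\,ds\bigr)$, and bounds $\|A\|_{L^\infty}$ and $\grad\,\mathrm{tr}A$ directly. But there is one genuine gap, and it sits exactly where you deviate: the claim that spatial derivatives of $L[f]$ can be ``moved onto $f$ by integrating by parts against $\phi(|\x-\y|)$,'' so that merely continuous $\phi$ suffices and the flow derivatives grow at most exponentially. Once the derivative lands on $f$, the best available bound is of the form $\|\partial_\x L[f]\|_{L^\infty}\lesssim V_0\,\|\grad_\x f\|_{L^1}$ (divided by the lower bound on $\Phi$ in the MT case), i.e.\ the coefficient matrix $A$ of the variational ODE is no longer controlled by the data alone but by the gradient of the unknown. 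The resulting Gronwall inequality for $y(t)=\|f(t)\|_{W^{1,\infty}}$ (equivalently for $\|\grad f\|_{L^1}$, using the compact support) closes only quasilinearly, $y'\le C(1+y)y$, a Riccati-type inequality whose solution can blow up in finite time. So neither your ``at most exponential growth of the flow derivatives'' nor the continuation to arbitrary $T$ follows on this route, and the local Lipschitz property of $g\mapsto L[g]$ on $C^1\cap W^{1,\infty}$ that your contraction requires fails for the same reason.

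The paper avoids this by putting the derivative on the kernel, implicitly assuming $\phi\in\dot{W}^{1,\infty}$: then $|\partial_\x L[f]|\le\|\phi\|_{\dot{W}^{1,\infty}}V_0$ for CS, and for MT it is bounded by a constant involving $\|\phi\|_{\dot{W}^{1,\infty}}V_0/\phi(D)$, while $|\partial_\v L[f]|\le1$ --- all independent of $\grad f$ --- so $J$, $J^{-1}$ and $\det J$ grow at most like $e^{Ct}$ and the Gronwall step is linear, yielding $\|f(t)\|_{W^{1,\infty}}\le\|f_0\|_{W^{1,\infty}}e^{Ct}$. The only second-order coefficient needed is $\grad\,\mathrm{tr}A=\grad\partial_\v L[f]$, and here your affine-in-$\v$ observation coincides with the paper's: $\partial^2_\v L[f]=0$, while $\partial_\x\partial_\v L[f]$ is bounded by $\|\phi\|_{\dot{W}^{1,\infty}}$ for CS and vanishes for MT. The fix, therefore, is to assume Lipschitz $\phi$ and differentiate the kernel rather than $f$. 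A further small point: invoking theorem \ref{thm:flock} to get $\Phi\ge\phi(D)m$ before global existence is established is mildly circular (the paper shares this wrinkle); for existence on a fixed $[0,T]$ the crude bounds $V(t)\le V_0$ (from \eqref{eq:dV}) and $S(t)\le S_0+V_0T$ already give $\Phi\ge\phi(S_0+V_0T)\,m>0$ under \eqref{eq:phi}, and the uniform-in-time support bound can then be recovered a posteriori.
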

\begin{rem}
Formally, by integrating the velocity variable, we can get
corresponding hydrodynamic systems of flocking, for both CS and MT
systems. The existence of global strong solution is
not as straightforward as the kinetic system, due to the nonlinear conviction
term. We refer to \cite{TT} for existence and flocking properties of
the hydrodynamic flocking systems, where a critical threshold is
introduced to guarantee global strong solutions. 
\end{rem}

\begin{proof}[Proof of proposition \ref{prop:classical}]
Take characteristic path $(\x(t),\v(t))$ starting at $(\xo,\vo)$.
\begin{align*}
\dot{\x}(t,\xo,\vo)&=\v(t,\xo,\vo),\\
\dot{\v}(t,\xo,\vo)&=L(f)(t,\x(t,\xo,\vo),\v(t,\xo,\vo)).
\end{align*}
Define the Jacobian 
\[J(t,\xo,\vo)=\begin{bmatrix}\partial_\xo \x&\partial_\vo \x\\\partial_\xo
  \v&\partial_\vo \v\end{bmatrix}, \quad
A(t,\xo,\vo)=\begin{bmatrix}0&1\\ \partial_\xo L(f)&\partial_\vo L(f)\end{bmatrix}.\]
It is easy to check that
\begin{align*}
\dot{J}(t,\xo,\vo)&=A(t,\x,\v) J(t,\xo,\vo),\quad
J(\xo,\vo,0)\equiv\mathbb{I}_{2n\times2n},\\
\dot{J}^{-1}(t,\xo,\vo)&=- J^{-1}(t,\xo,\vo) A(t,\x,\v),\quad
J^{-1}(\xo,\vo,0)\equiv\mathbb{I}_{2n\times2n},\\
det~J(t,\xo,\vo)&=\exp\left(\int_0^t\text{tr}A(s,\x(s,\xo,\vo), \v(s,\xo,\vo))ds\right).
\end{align*}

Along the characgeristics, we have
\[f(t,\x(t,\xo,\vo),\v(t,\xo,\vo))=f_0(\xo,\vo)(det~J(t,\xo,\vo))^{-1}.\]
It is sufficient to prove that $f(t,\cdot,\cdot)\in L^\infty_{\x,\v}$ in any finite
time as long as $\|A\|_{L^\infty}$ is finite.

To this end, we check for CS,
\begin{align*}
\left|\partial_\x L(f)\right|&
=\left|\frac{1}{m}\iint\partial_\x\phi(|\x-\y|)(\v^*-\v)f(\y,\v^*)d\y d\v^*\right|
\leq\|\phi\|_{\dot{W}^{1,\infty}}V(t) \leq\|\phi\|_{\dot{W}^{1,\infty}}V_0,\\
\left|\partial_\v L(f) \right|&
=\left|-\frac{1}{m}\iint\phi(|\x-\y|)f(\y,\v^*)d\y d\v^*\right|\leq 1.
\end{align*}
For MT,
\begin{align*}
\left|\partial_\x L(f)\right|&
=\left|\iint\partial_\x\left(\frac{\phi(|\x-\y|)}{\Phi(\x)}\right)(\v^*-\v)f(\y,\v^*)d\y
  d\v^*\right|\\
&\leq V(t)\left[
\frac{1}{\Phi(\x)}\iint\partial_\x\phi(|\x-\y|)f(\y,\v^*)d\y d\v^*
+\frac{|\partial_\x\Phi(\x)|}{\Phi(\x)^2}\iint\phi(|\x-\y|)f(\y,\v^*)d\y
dv^*\right]\\
& \leq 2V_0\frac{\left|\partial_\x\Phi(\x)\right|}{\Phi(\x)}
\leq\frac{2m\|\phi\|_{\dot{W}^{1,\infty}}V_0 }{\phi(D)},\\
\partial_\v L(f)&
=-\iint\frac{\phi(|\x-\y|)}{\Phi(\x)}f(\y,\v^*)d\y d\v^*
=-1.
\end{align*}

For classical solutions, we need to bound $\grad_{(\x,\v)} f$. It
fact, we have
\begin{align*}
(\grad f)&(t,\x(t),\v(t))=J^{-1}(t)\grad
f_0(\xo,\vo)\exp\left(-\int_0^t\text{tr}A(s,\x(s),\v(s))ds\right)\\
&+f_0(\xo,\vo)\exp\left(-\int_0^t~trA(s,\x(s),\v(s))ds\right)\int_0^t
J(s)~(\grad\text{tr}A)(s,\x(s),\v(s))ds.
\end{align*}

As $\|A\|_{L^\infty}$ is bounded, it is clear that $J$ and $J^{-1}$
are bounded pointwise by $e^{Ct}$. To obtain boundedness of $\grad f$,
we are left to estimate $\displaystyle\grad\text{tr}A=\grad\partial_\v
L(f)$. Notice that $L(f)$ is linear in $\v$ for both setups. Hence,
$\partial_\v^2L(f)=0$.

Compute $\partial_\x\partial_\v L(f)$ for CS:
\[\left|\partial_\x\partial_\v L(f)\right|
=\left|-\frac{1}{m}\iint\partial_\x\phi(|\x-\y|)f(\y,\v^*)d\y d\v^*\right|
\leq\|\phi\|_{\dot{W}^{1,\infty}}.\]
For MT, as $\partial_\v L(f)=-1$, it directly implies
$\partial_\x\partial_\v L(f)=0$.

We end up with global existence of classical solutions with
\[\|f(t,\cdot,\cdot)\|_{W^{1,\infty}}\leq \|f_0\|_{W^{1,\infty}}e^{Ct}.\]
\end{proof}
Thus, we complete the proof of theorem \ref{thm:flocking}.

\section{A discontinuous Galerkin method}\label{sec:DG}
In this section, we start to discuss the numerical implementation of
kinetic flocking system \eqref{eq:main}. The main goal is to design
high accuracy schemes that are stable as the solution becomes
singular.

As the singularity happens in $\v$ variable due to the alignment
operator, we shall concentrate on the flocking part of the system
\[\partial_t f+\grad_\v\cdot Q(f,f)=0.\] 

We can rewrite the system in the following form
\begin{equation}\label{eq:flocking}
\partial_tf(t,\x,\v)=-\grad_\v\cdot\left(fL[f]\right)=-\grad_\v\cdot\left(f(t,\x,\v)\int
  (\v^*-\v)G(t,\x,\v^*)d\v^*\right),
\end{equation}
where $G$ is defined by
\[G(t,\x,\v)=\frac{1}{\Phi(\x)}\int\phi(|\x-\y|)f(t,\y,\v)d\y.\]
It is easy to check that
\begin{equation}\label{eq:Gprop}
\int G(t,\x,\v)d\v\leq1,
\end{equation}
for all $\x$ and $t$. In particular, the equality holds under MT setup.

As \eqref{eq:flocking} is homogeneous in $\x$, we omit the $\x$ dependency
for simplicity from now on. 

\subsection{The DG framework}
The idea of the discontinuous Galerkin method is to use piecewise
polynomial to approximate the solution. We take 1D as an easy illustration.

We partition the computational domain $\Omega=[a,b]$ on $v$ into $N$ cells
$\{I_j\}_{j=1}^N$ 
\[I_j=\left(v_{j-1/2}, v_{j+1/2}\right),\quad v_j=a+(j-1/2)\Delta v,
\quad\Delta v=\frac{b-a}{N},\]
with uniform mesh size $h:=\Delta v$ for simplicity. The space we are
working with is
\[V_h:=\left\{f~:~\text{For all } j=1,\cdots,N, ~f|_{I_j}\in\poly_k\right\},\]
where $\poly_k$ denotes polynomial of degree at most $k$. The weak
formulation of \eqref{eq:flocking} reads
\begin{equation}\label{eq:weak}
\frac{d}{dt}\int_{I_j} f(v)\p(v) dv = -\p
fL[f]\left|_{v_{j-1/2}}^{v_{j+1/2}}\right.+\int_{I_j}fL[f]\phi'
dv,\quad \forall \p=\p(v)\in V_h.
\end{equation}
The DG scheme is to find $f\in V_h$ which satisfies \eqref{eq:weak}.

If we apply test function $\p(v)=1$ on \eqref{eq:weak}, we
get
\[\frac{d}{dt}\bar{f}_j=-\frac{1}{h}fL[f]\left|_{v_{j-1/2}}^{v_{j+1/2}}\right.,\]
where $\bar{f}_j$ is the cell average of $I_j$.
With a forward Euler scheme in time, this becomes the classical finite
volume method, namely
\[\bar{f}_j(t+\Delta t)=\bar{f}_j(t)+\frac{\Delta t}{h}\left[
f(v_{j-1/2}^+)\cdot L[f](v_{j-1/2})-
f(v_{j+1/2}^-)\cdot L[f](v_{j+1/2})\right].\]

\begin{subequations}\label{eq:fv}
The heart of the matter is to approximate the flux at the cell interfaces.
To ensure the conservation law, we modify the scheme using a numerical
flux
\begin{equation}\label{eq:test0}
\bar{f}_j(t+\Delta t)=\bar{f}_j(t)+\frac{\Delta t}{h}\left[
\hat{f}(v_{j-1/2})\cdot L[f](v_{j-1/2})
-\hat{f}(v_{j+1/2})\cdot L[f](v_{j+1/2})\right]
\end{equation}
so that the outflux and influx at the same interface add up to
zero. Note that $L$ is a global operator on $f$, and $L[f]$ is
continuous at the interface, we need to compute $L[f]$ using
information from all cells. Then, with fixed $L[f](v_{j+1/2})$, the
flux is linear in $f$. We use upwind fluxes where
\begin{equation}
\hat{f}_{j+1/2}:=\hat{f}(v_{j+1/2})=\begin{cases}
f(v_{j+1/2}^-)&\text{if }L[f](v_{j+1/2})\geq0\\f(v_{j+1/2}^+)&\text{if
}L[f](v_{j+1/2})<0\end{cases}.
\end{equation}
\end{subequations}
\begin{rem}
We use monotone numerical flux for DG scheme. In our simple
case when the flux is linear, some widely used flux such as Godunov
flux, Lax-Friedrich flux coincide with the upwind flux.
\end{rem}

\subsection{A first order scheme}\label{sec:first}
Let us consider the simple case when $k=0$. A piecewise constant
approximation yields first order accuracy. To obtain
$\bar{f_j}(t+\Delta t)$, we apply scheme \eqref{eq:fv} with
\[f(v_{j+1/2}^+)=\bar{f}_{j+1},\quad f(v_{j+1/2}^-)=\bar{f}_j,\]
 as $v$ is a constant in each cell. We are left with computing
$L[f]$. As $f$ is piecewise constant in $v$ for all $x$, clearly $G$
is also a piecewise constant in $v$. Hence,
\begin{align*}
L[f](v_{j+1/2})=\int_\Omega(v^*-v_{j+1/2})G(v^*)dv^*=
\sum_{l=1}^N\bar{G}_l\int_{I_l}(v^*-v_{j+1/2})dv^*=h^2\sum_{l=1}^N
(l-j-1/2)\bar{G}_l,
\end{align*}
where $\bar{G}_l$ is the value of $G$ in $I_l$. We can use any first
order numerical integration on $x$ to compute $\bar{G}_l$ from
$\bar{f}_l$.

We prove the positivity preserving property of the first order
scheme, which ensures $L^1$ stability of the numerical solution.
\begin{prop}\label{prop:CFL}
Suppose $\bar{f}_j(t)>0$ for all $j$. Applying the first order scheme,
we have $\bar{f}_j(t+\Delta t)>0$ under CFL condition
\begin{equation}\label{eq:CFL}
\frac{\Delta t}{h}\max_j\left| L[f](v_{j+1/2})\right|<\frac{1}{2}.
\end{equation}
\end{prop}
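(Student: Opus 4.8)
The plan is to exhibit the updated cell average $\bar{f}_j(t+\Delta t)$ as a linear combination of the old averages $\bar{f}_{j-1}(t),\bar{f}_j(t),\bar{f}_{j+1}(t)$ whose coefficients are all non-negative (with the diagonal one strictly positive) under the CFL condition \eqref{eq:CFL}. Since the old averages are positive by hypothesis, strict positivity of the new average is then immediate. The whole argument rests on recasting the upwind flux so that this monotone structure becomes transparent.

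First I would split each interface value into its positive and negative parts. Writing $L_{j+1/2}:=L[f](v_{j+1/2})$ and $L_{j+1/2}^{+}:=\max(L_{j+1/2},0)\ge 0$, $L_{j+1/2}^{-}:=\min(L_{j+1/2},0)\le 0$, the definition of the upwind flux together with $f(v_{j+1/2}^{-})=\bar{f}_j$, $f(v_{j+1/2}^{+})=\bar{f}_{j+1}$ for piecewise constants collapses into the single identity
\[
\hat{f}(v_{j+1/2})\,L_{j+1/2}=\bar{f}_j\,L_{j+1/2}^{+}+\bar{f}_{j+1}\,L_{j+1/2}^{-},
\]
valid regardless of the sign of $L_{j+1/2}$. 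Substituting this and the analogous identity at $v_{j-1/2}$ into scheme \eqref{eq:fv} and collecting the terms multiplying each neighbor, I obtain the coefficient of $\bar{f}_{j-1}$ as $\frac{\Delta t}{h}L_{j-1/2}^{+}\ge 0$, the coefficient of $\bar{f}_{j+1}$ as $-\frac{\Delta t}{h}L_{j+1/2}^{-}\ge 0$, and the diagonal coefficient of $\bar{f}_j$ as
\[
1+\frac{\Delta t}{h}\left(L_{j-1/2}^{-}-L_{j+1/2}^{+}\right).
\]

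The two off-diagonal coefficients are non-negative purely by the sign conventions of the $\pm$ parts, with no use of the time step restriction. The CFL condition enters only to keep the diagonal coefficient positive, and this is the one step worth isolating: I would bound $L_{j+1/2}^{+}-L_{j-1/2}^{-}\le |L_{j+1/2}|+|L_{j-1/2}|\le 2\max_k|L_{k+1/2}|$, so that \eqref{eq:CFL} gives $\frac{\Delta t}{h}(L_{j+1/2}^{+}-L_{j-1/2}^{-})<2\cdot\tfrac12=1$ and hence the diagonal coefficient exceeds $1-1=0$. It is exactly this \emph{two-interface} estimate — each of the left and right interfaces contributing one term — that manufactures the factor $2$ and therefore the $\tfrac12$ threshold in the statement. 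With all three coefficients non-negative and the diagonal one strictly positive, $\bar{f}_j(t+\Delta t)$ is a strictly positive combination of positive quantities, giving the claim.

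I expect no serious conceptual obstacle in the interior; once the upwind flux is rewritten through the positive/negative parts the computation is routine linear algebra. The only bookkeeping care is at the boundary cells $j=1$ and $j=N$, where one neighboring average is absent: there I would invoke the compact support of $f$ in $v$ (equivalently, a no-influx boundary flux), so the missing term vanishes and the same non-negativity argument applies verbatim.
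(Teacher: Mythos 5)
Your proof is correct and follows essentially the same route as the paper: both arguments show that under the CFL condition the updated cell average is a positive combination of the old averages $\bar{f}_{j-1},\bar{f}_j,\bar{f}_{j+1}$, with the factor $2$ (hence the threshold $\tfrac12$) arising from the two interface contributions. The only difference is bookkeeping --- you collect coefficients via the decomposition $\hat{f}\,L = \bar{f}_j L^{+}+\bar{f}_{j+1}L^{-}$, while the paper splits $\bar{f}_j$ into two halves and performs a sign case analysis at each interface --- and your explicit treatment of the boundary cells is a small point of care the paper leaves implicit.
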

\begin{proof}
Rewrite \eqref{eq:test0} as following
\[\bar{f}_j(t+\Delta t)=\frac{1}{2}\left[\bar{f}_j(t)+\frac{2\Delta t}{h}
\hat{f}(v_{j-1/2})\cdot L[f](v_{j-1/2})\right]+\frac{1}{2}\left[\bar{f}_j(t)
-\frac{2\Delta t}{h}\hat{f}(v_{j+1/2})\cdot L[f](v_{j+1/2})\right].
\]
We will show that both terms are positive under CFL condition.

For the first term, if $L[f](v_{j-1/2})\geq0$, clearly
\[\bar{f}_j(t)+\frac{2\Delta t}{h}\hat{f}(v_{j-1/2})\cdot
L[f](v_{j-1/2})=\bar{f}_j(t)+\frac{2\Delta t}{h}
\bar{f}_{j-1}(t)\cdot L[f](v_{j-1/2})>0.\]
if $L[f](v_{j-1/2})<0$, then under CFL condition, we have
\[\bar{f}_j(t)+\frac{2\Delta t}{h}\hat{f}(v_{j-1/2})\cdot
L[f](v_{j-1/2})=\left[1-\frac{2\Delta
    t}{h}\left|L[f](v_{j-1/2})\right|\right]\bar{f}_j(t)>0.\]

Similarly, the second term is positive under the same CFL
condition. Therefore, $\bar{f}_j(t+\Delta t)>0$, for all $j$.
\end{proof}

\begin{rem}\label{rem:CFL}
The CFL condition \eqref{eq:CFL} depends on time $t$. We can derive a
sufficient CFL condition where the choice of $\Delta t$ is independent
of $t$.

As $G$ is piecewise linear, we deduce from \eqref{eq:Gprop} that
\[\sum_{l=1}^N\bar{G}_l=\int_\Omega G(v)dv\leq1.\]
Hence,
\[\left|L[f](v_{j+1/2})\right|=h^2\left|\sum_{l=1}^N(l-j-1/2)\bar{G}_l\right|\leq
h^2(N-1/2)\sum_{l=1}^N\bar{G}_l=(N-1/2)h< b-a,
\]
for any $j=0,\cdots,N-1$. This implies a sufficient CFL condition
\[\frac{\Delta t}{h}<\frac{1}{2(b-a)}.\] 
\end{rem}

We complete an algorithm solving \eqref{eq:flocking} with first order
accuracy.

\subsection{Higher order DG schemes}
In order to obtain high order accuracy, we apply \eqref{eq:weak}
with test functions with high orders. Choose Legendre polynomials on $I_j$
\[\p_j^{(0)}(v)=1,\quad\p_j^{(1)}(v)=v-v_j,\quad
\p_j^{(2)}(v)=(v-v_j)^2-\frac{1}{12}h^2,\quad\cdots.\] 
Denote
$\displaystyle
f_j^{(l)}=\frac{1}{h^{l+1}}\int_{I_j}f(v)\p_j^{(l)}dv$. Clearly, 
all $f\in\poly_k$ can be determined by $f_j^{(l)}$ for $j=1,\cdots,N$, $l=0,\cdots,k$.
As a matter of fact, we can write $f(v)=\sum_{l=0}^k
a_lf_j^{(l)}\p_j^{(l)}(v)$ for $v\in I_j$, with $a_0=1,
a_1=12/h,a_2=180/h^2$, etc. (Consulting \cite{CS}.)

From \eqref{eq:weak}, we obtain the evolution of $f_j^{(l)}$.
\begin{equation}\label{eq:evolution}
\begin{aligned}
\frac{d}{dt}f_j^{(0)}&=\frac{1}{h}(\hat{f}_{j-1/2}L_{j-1/2}-\hat{f}_{j+1/2}L_{j+1/2}),\\
\frac{d}{dt}f_j^{(1)}&=-\frac{1}{2h}(\hat{f}_{j-1/2}L_{j-1/2}+\hat{f}_{j+1/2}L_{j+1/2})
+\frac{1}{h^2}\int_{I_j}fL[f]dv,\\
\frac{d}{dt}f_j^{(2)}&=\frac{1}{6h}(\hat{f}_{j-1/2}L_{j-1/2}-\hat{f}_{j+1/2}L_{j+1/2})
+\frac{2}{h^3}\int_{I_j}fL[f](v-v_j)dv,
\end{aligned}
\end{equation}
etc. Here, we denote $L_{j\pm 1/2}=L[f](v_{j\pm 1/2})$ for simplicity.

Next, we compute $L_{j+1/2}$ and the two integrals in the dynamics
above, given $f\in V_h$.

For $k=0$, $L_{j+1/2}$ is given in section
\ref{sec:first}. $f_j^{(0)}$ coincide with $\bar{f}_j$.

For $k\geq1$, we use $L^2$-orthogonality property of Legendre
polynomial to compute  
\begin{align*}
L[f](v)~=~&\int (v^*-v)G(v^*)dv^*\\
~=~&\sum_{l=1}^N\int_{I_l}\left[(v_l-v)\p_l^{(0)}(v^*)+\p_l^{(1)}(v^*)\right]\cdot
\left[G_l^{(0)}\p_l^{(0)}(v^*)+\frac{12}{h}G_l^{(1)}\p_l^{(1)}(v^*)+\cdots\right]dv^*\\
~=~&h\sum_{l=1}^N(v_l-v)G_l^{(0)}+h^2\sum_{l=1}^NG_l^{(1)}.
\end{align*}
All other terms of $G(v^*)$ is $L^2$-orthogonal to $v^*-v$ and
have no contribution to $L[f](v)$. This implies
\[L_{j+1/2}~=~h\sum_{l=1}^N(v_l-v_{j+1/2})G_l^{(0)}+h^2\sum_{l=1}^NG_l^{(1)} =
h^2\sum_{l=1}^N \left[(l-j-1/2)G_l^{(0)}+G_l^{(1)}\right].\]

Moreover, $L[f](v)$ is linear in terms of $v$. Again, by
orthogonality, we get
\begin{align*}
\frac{1}{h^2}\int_{I_j}fL[f]dv~=~&\frac{1}{h^2}\int_{I_j}
f(v)\left[\left(h\sum_{l=1}^N(v_l-v_j)G_l^{(0)}+h^2\sum_{l=1}^NG_l^{(1)}\right)\p_j^{(0)}(v)
+\left(-h\sum_{l=1}^NG_l^{(0)}\right)\p_j^{(1)}(v)\right]dv\\
~=~&
h\left\{f_j^{(0)}\sum_{l=1}^N[(l-j)G_l^{(0)}+G_l^{(1)}]
-f_j^{(1)}\sum_{l=1}^NG_l^{(0)}\right\}.
\end{align*}

Finally, for $k\geq2$, 
\[\frac{2}{h^3}\int_{I_j}fL[f](v-v_j)dv=2h\left\{
f_j^{(1)}\sum_{l=1}^N[(l-j)G_l^{(0)}+G_l^{(1)}]
-\left(\frac{1}{12}f_j^{(0)}+f_j^{(2)}\right)\sum_{l=1}^NG_l^{(0)}\right\}.\]

\begin{rem}
As shown above, to compute the right hand side of
\eqref{eq:evolution}, we need to calculate the following sums:
\[\sum_{l=1}^NG_l^{(0)},\quad
\sum_{l=1}^NG_l^{(1)}\quad\text{and}\quad
\sum_{l=1}^N(l-j)G_l^{(0)}.\]
The first two sums are independent of $j$. The third sum has a
convolution structure. Fast convolution solvers could be used to
compute the sum.
\end{rem}

\subsection{Positivity preserving}\label{sec:pos}
One major difficulty of high order schemes is that the reconstructed
solution is not necessarily positive. A negative computational
solution will quickly become unstable. Suitable limiters are needed to
preserve positivity of the numerical solution. We
proceed with the limiter introduced in \cite{SZ}.

First, we extend proposition \ref{prop:CFL} to high order schemes and
prove positivity for $\bar{f}_j$. To proceed, we use Gauss-Lobatto
quadrature points on $I_j$, denoting $\{v_j^i\}_{i=1}^n$. In
particular, $v_j^1=v_{j-1/2}$ and $v_j^n=v_{j+1/2}$. For $f_j$ a
polynomial of degree up to $2n-3$, 
\[\bar{f}_j=\frac{1}{h}\int_{I_j}f_j(v)dv=\frac{1}{h}\sum_{i=1}^n\alpha_if_j(v_j^i),\]
where $\alpha_i$ are Gauss-Lobatto weights. For example, when $n=2$,
$\alpha_1=\alpha_2=1/2$; when $n=3$, $\alpha_1=\alpha_3=1/6$ and
$\alpha_2=2/3$. Note that $\alpha_i$'s are all positive, summing up to
1, and symmetric $\alpha_i=\alpha_{n+1-i}$.

\begin{prop}\label{prop:CFLhigh}
Suppose $f_j(t,v_j^i)>0$ for all Gauss-Lobatto quadrature points
$v_j^i$. Then, for any scheme with forward Euler in time and DG in space  with
order $k\leq 2n-3$,
we have $\bar{f}_j(t+\Delta t)>0$, under CFL condition
\begin{equation}\label{eq:CFLhigh}
\frac{\Delta t}{h}\max_j\left| L_{j+1/2}\right|<\alpha_1.
\end{equation}
In particular, for $k=0,1$, $\alpha_1=1/2$. For $k=2$, $\alpha_1=1/6$.
\end{prop}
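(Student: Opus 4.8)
The plan is to follow the Zhang--Shu positivity framework and reduce the high-order update for the cell average to the same structure already exploited in Proposition \ref{prop:CFL}. The starting point is the forward-Euler update for $f_j^{(0)}=\bar f_j$ read off from the first line of \eqref{eq:evolution},
\[\bar f_j(t+\Delta t)=\bar f_j(t)+\frac{\Delta t}{h}\big(\hat f_{j-1/2}L_{j-1/2}-\hat f_{j+1/2}L_{j+1/2}\big).\]
First I would rewrite $\bar f_j(t)$ using the Gauss--Lobatto quadrature. Since the scheme uses $f_j\in\poly_k$ with $k\le 2n-3$, the $n$-point Gauss--Lobatto rule is exact on $I_j$, so $\bar f_j=\sum_{i=1}^n\alpha_i f_j(v_j^i)$ with positive weights summing to $1$ and $\alpha_1=\alpha_n$. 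Isolating the two endpoint contributions, which coincide with the one-sided interface values $f_j(v_{j-1/2})=f(v_{j-1/2}^+)$ and $f_j(v_{j+1/2})=f(v_{j+1/2}^-)$, gives
\[\bar f_j(t)=\alpha_1 f(v_{j-1/2}^+)+\alpha_1 f(v_{j+1/2}^-)+\sum_{i=2}^{n-1}\alpha_i f_j(v_j^i).\]
The interior-node sum is a nonnegative combination of positive nodal values, hence strictly positive, and is untouched by the flux; it may be set aside.

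Next I would symmetrically split the remaining boundary-plus-flux part, using $\alpha_n=\alpha_1$, into a left piece $\alpha_1 f(v_{j-1/2}^+)+\frac{\Delta t}{h}\hat f_{j-1/2}L_{j-1/2}$ and a right piece $\alpha_1 f(v_{j+1/2}^-)-\frac{\Delta t}{h}\hat f_{j+1/2}L_{j+1/2}$. I would then run the same two-case sign analysis as in Proposition \ref{prop:CFL}, the only change being that the threshold $1/2$ is replaced by $\alpha_1$. For the left piece: if $L_{j-1/2}\ge0$ upwinding selects $\hat f_{j-1/2}=f(v_{j-1/2}^-)$, a positive neighbour value (itself a Gauss--Lobatto node of $I_{j-1}$), so the piece is a sum of two nonnegative terms and is strictly positive with no constraint; if $L_{j-1/2}<0$ upwinding selects $\hat f_{j-1/2}=f(v_{j-1/2}^+)$, and the piece collapses to $f(v_{j-1/2}^+)\big(\alpha_1-\frac{\Delta t}{h}|L_{j-1/2}|\big)$, which is positive precisely under \eqref{eq:CFLhigh}. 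The right piece is handled identically by symmetry, and summing all pieces with the positive interior-node sum yields $\bar f_j(t+\Delta t)>0$.

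The specialization of $\alpha_1$ then follows from the exactness constraint $k\le 2n-3$: degrees $k=0,1$ need only $n=2$, giving $\alpha_1=1/2$, while $k=2$ forces $n=3$, giving $\alpha_1=1/6$. I expect the only delicate point to be the bookkeeping in the middle paragraph --- correctly matching each upwind-selected value $\hat f_{j\pm1/2}$ to the appropriate one-sided limit, and recognizing that in every branch the term is either a manifestly positive neighbour contribution or a current-cell contribution whose positivity is governed exactly by the CFL number $\frac{\Delta t}{h}|L_{j\pm1/2}|<\alpha_1$. Taking the maximum over $j$ in \eqref{eq:CFLhigh} is what guarantees this bound simultaneously at both interfaces of every cell.
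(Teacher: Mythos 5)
Your proposal is correct and follows essentially the same route as the paper's own proof: the Gauss--Lobatto exactness for $k\le 2n-3$ to decompose $\bar f_j$ into interior nodes plus the two endpoint values, a symmetric split of the flux contribution onto the endpoints using $\alpha_n=\alpha_1$, and the identical two-case upwind sign analysis reducing positivity to $\frac{\Delta t}{h}|L_{j\pm 1/2}|<\alpha_1$. Your version is, if anything, slightly more careful than the paper's in explicitly identifying $\hat f_{j-1/2}=f_{j-1}(v_{j-1/2})$ as a Gauss--Lobatto nodal value of the neighbouring cell (and in writing the quadrature identity $\bar f_j=\sum_i\alpha_i f_j(v_j^i)$ without the paper's spurious $1/h$ factor), so no changes are needed.
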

\begin{proof}
The dynamic of $\bar{f}_j=f_j^{(0)}$ reads
\begin{align*}
\bar{f}_{j}(t+\Delta t) = & \bar{f}_j(t)+\frac{\Delta
  t}{h}\left[\hat{f}(v_{j-1/2})\cdot L_{j-1/2}-\hat{f}(v_{j+1/2})\cdot
  L_{j+1/2}\right]\\
=& \frac{1}{h}\sum_{i=2}^{n-1}\alpha_if_j(v_j^i)+\alpha_1\left(
f_j(v_{j-1/2})+\frac{\Delta t}{\alpha_1h}\hat{f}(v_{j-1/2})\cdot L_{j-1/2}\right)\\
&+\alpha_n\left(
f_j(v_{j+1/2})-\frac{\Delta t}{\alpha_nh}\hat{f}(v_{j+1/2})\cdot L_{j+1/2}\right).
\end{align*}

We check positivity for the last two terms. For the second term, if
$L_{j-1/2}\geq 0$, clearly
\[f_j(v_{j-1/2})+\frac{\Delta t}{\alpha_1h}\hat{f}(v_{j-1/2})\cdot
L_{j-1/2}=f_j(v_{j-1/2})+\frac{\Delta t}{\alpha_1h}f_{j-1}(v_{j-1/2})\cdot L_{j-1/2}>0.\]
If $L_{j-1/2}<0$, then under CFL condition, we have
\[f_j(v_{j-1/2})+\frac{\Delta t}{\alpha_1h}\hat{f}(v_{j-1/2})\cdot
L_{j-1/2}=
\left[1-\frac{\Delta
    t}{\alpha_1h}|L_{j-1/2}|\right]f_j(v_{j-1/2})>0.\]

Similarly, the third term is positive under the same CFL condition, as
$\alpha_n=\alpha_1$. Therefore, $\bar{f}_j(t+\Delta t)>0$, for all $j$.
\end{proof}

Similar to remark \ref{rem:CFL}, there is a sufficient CFL condition
independent of $t$ for the high order DG scheme. We estimate the
additional part of $L_{j+1/2}$ as below.
\[\left|h^2\sum_{l=1}^NG_l^{(1)}\right|=\frac{1}{\Phi(x)}\left|\sum_{l=1}^N\int\int_{I_l}\phi(|x-y|)f(y,v)(v-v_l)dvdy\right|
\leq \frac{h}{2}\int_\Omega G(x,v)dv\leq\frac{h}{2}.\]
Together with the estimate for the first part (shown in remark
\ref{rem:CFL}), we get
\[\left|L_{j+1/2}\right|\leq \left(N-\frac{1}{2}\right)h+\frac{h}{2}=Nh=(b-a).\]
With the correction term, we have the same bound on $L_{j+1/2}$. It
yields the following sufficient CFL condition
\begin{equation}\label{eq:sCFL}
\frac{\Delta t}{h}<\frac{\alpha_1}{b-a}.
\end{equation}

To make sure $f_j$ is positive at Gauss-Lobatto quadrature points, we
modify $f(t)$ using an interpolation
between the current $f$ and the positive constant $\bar{f}=f^{(0)}$,
namely, in $I_j$ at time $t+\Delta t$,
\[\tilde{f}_j(v)=\theta_j f_j(v)+(1-\theta_j)\bar{f}_j,\]
where $\theta_j\in[0,1]$ to be chosen. When $\theta_j=1$, there is no
modification and high accuracy is preserved. When, $\theta_j=0$, the
modified solution coincides with the first order scheme. Hence, for
higher accuracy, $\theta_j$ should be as large as possible. On the
other hand, we need positivity of $\tilde{f}_j(v_j^i)$, i.e.
\[(\bar{f}_j-f_j(v_j^i))\theta_j <\bar{f}_j,\]
for all $i$. Therefore, we shall choose $\theta_j$ as follows
\[\theta_j=\begin{cases}\displaystyle\frac{\bar{f}_j-\epsilon}{\bar{f}_j-m_j}&\text{if
  } m_j<\epsilon\\~&~\\1&\text{if }m_j\geq\epsilon\end{cases},
\quad\text{where }
m_j:=\min_i f_j(v_j^i),~~ \epsilon=\min\{10^{-13}, \bar{f}_j\}.\]

The modified solution $\tilde{f}_j$ preserves the total mass as
well. It implies $L^1$ stability of the scheme.

We can write the modification in terms of $f_j^{(l)}$ where
\begin{equation}\label{eq:limiter}
\tilde{f}_j^{(0)}=f_j^{(0)},\quad
\tilde{f}_j^{(l)}=\theta_jf_j^{(l)}, l\geq1.
\end{equation} 
Indeed, the modification weakens the
high order correction at several cells to enforce positivity.
But it has been discussed in \cite{SZ} that the order of accuracy
is not strongly affected by this limiter.

We conclude this part with a summary of the stability result for our
high order DG schemes.
\begin{thm}[Positivity preserving]\label{thm:pp}
Consider \eqref{eq:flocking} with initial density $f_0\geq0$.
Then, the solution generated by the DG scheme \eqref{eq:evolution}
with limiter \eqref{eq:limiter} is positive in all 
time, under CFL condition \eqref{eq:sCFL}.
\end{thm}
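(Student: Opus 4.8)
The plan is to argue by induction on the time step, using Proposition \ref{prop:CFLhigh} and the construction of the limiter \eqref{eq:limiter} as two interlocking ingredients. Let $f^n$ denote the limited numerical solution at time $t_n=n\Delta t$, and take as the induction hypothesis that $f^n$ is positive at every Gauss-Lobatto quadrature point, $f_j^n(v_j^i)>0$ for all $j$ and $i$. Since the cell average is the positive-weight Gauss-Lobatto combination $\bar{f}_j^n=\frac{1}{h}\sum_i\alpha_i f_j^n(v_j^i)$ with all $\alpha_i>0$, the hypothesis immediately gives $\bar{f}_j^n>0$ as well.

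For the inductive step I first advance by one forward-Euler DG step \eqref{eq:evolution} to an intermediate solution $f^{n+1,*}$. Because the hypothesis supplies positivity at the Gauss-Lobatto points of $f^n$, Proposition \ref{prop:CFLhigh} applies and yields positive new cell averages $\bar{f}_j^{n+1}>0$, provided the CFL condition \eqref{eq:CFLhigh} holds at $t_n$. The crucial point is to replace the $t$-dependent condition \eqref{eq:CFLhigh} by the uniform one \eqref{eq:sCFL}: using the bound $|L_{j+1/2}|\leq b-a$ established just before the theorem (which holds at every time level, since it only uses $\int G\,dv\leq 1$), condition \eqref{eq:sCFL} implies \eqref{eq:CFLhigh} at every step, so a single $t$-independent choice of $\Delta t$ suffices throughout the computation.

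Next I apply the limiter \eqref{eq:limiter} to $f^{n+1,*}$ to define $f^{n+1}$. Because the limiter leaves the zeroth coefficient untouched, $\tilde{f}_j^{(0)}=f_j^{(0)}$, the cell averages are unchanged and remain positive. By the choice of $\theta_j$, at the minimizing node one computes $\tilde{f}_j(v_j^i)=\bar{f}_j-\theta_j(\bar{f}_j-m_j)=\epsilon>0$, while at every other node $f_j(v_j^i)\geq m_j$ gives $\tilde{f}_j(v_j^i)\geq\epsilon>0$; hence $f^{n+1}$ is positive at all Gauss-Lobatto points, which is exactly the induction hypothesis at level $n+1$. Taking $f^0$ to be the $L^2$ projection of $f_0\geq 0$ onto $V_h$ followed by one application of the limiter establishes the base case, and the induction closes.

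The main obstacle is not any single estimate — each is routine given the two preceding results — but rather verifying that the two mechanisms interlock correctly: the limiter needs a positive cell average $\bar{f}_j>0$ to be well defined (so that $\theta_j$ and $\epsilon$ make sense), and this is precisely what Proposition \ref{prop:CFLhigh} delivers from the positivity of the previous step at the quadrature nodes, while the limiter in turn restores positivity at the nodes to feed the next invocation of the proposition. A secondary point worth isolating is the degenerate case of a cell with vanishing mass, $\bar{f}_j=0$, where $\epsilon=0$ and $\theta_j=1$: there $f_j\equiv 0$ on that cell and positivity must be read as nonnegativity. Finally, if a higher-order SSP Runge-Kutta time integrator is used in place of forward Euler, one observes that each stage is a convex combination of forward-Euler updates, so positivity of the cell averages is inherited stagewise and the same argument goes through verbatim.
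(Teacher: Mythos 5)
Your proof is correct and follows essentially the paper's own argument: the paper states Theorem \ref{thm:pp} without a separate proof, precisely because it is the assembly of Proposition \ref{prop:CFLhigh}, the time-uniform bound $\left|L_{j+1/2}\right|\leq b-a$ that upgrades \eqref{eq:CFLhigh} to \eqref{eq:sCFL}, and the limiter \eqref{eq:limiter} restoring positivity at the Gauss-Lobatto nodes while fixing the cell average, with the SSP Runge--Kutta extension handled by convexity exactly as you note. Your explicit induction on time steps (and the caveat about cells with vanishing average, where positivity degrades to nonnegativity) is a slightly more careful write-up of the same interlocking mechanism, not a different route.
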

\begin{rem}
The whole procedure can be extended to multi-dimensional systems. See
{\it e.g.} \cite{SZ2} for examples on this positivity preserving
limiter in multi dimension.
\end{rem}

\subsection{High order time discretization}
In this subsection, we discuss time discretization for the ODE systems
with respect to $f_j^{(l)}$. We already show positivity preserving and
$L^1$ stability for forward Euler time discretization, under CFL
condition \eqref{eq:CFLhigh}. To get high order accuracy in time, we use
strong stability preserving (SSP) Runge-Kutta method \cite{GST}. For
instance, a second order SSP scheme reads
\begin{align*}
f_{[1]}&~=~\mathtt{FE}(f(t),\Delta t)\\
f(t+\Delta t)&~=~ \frac{1}{2} f(t)+\frac{1}{2}
\mathtt{FE}(f_{[1]}, \Delta t),
\end{align*}
and a third order SSP scheme reads
\begin{align*}
f_{[1]}&~=~\mathtt{FE}(f(t),\Delta t)\\
f_{[2]}&~=~\frac{3}{4}f(t)+\frac{1}{4}\mathtt{FE}(f_{[1]},\Delta t)\\
f(t+\Delta t)&~=~ \frac{1}{3}f(t)+\frac{2}{3}
\mathtt{FE}(f_{[2]}, \Delta t).
\end{align*}
Here, $\mathtt{FE}(f,\Delta t)$ represents a forward Euler step with size
$\Delta t$.

As an SSP time discretization is a convex combination of forward
Euler, positivity preserving property is granted automatically.

\subsection{Full system}\label{sec:full}
We go back to the full kinetic Cucker-Smale system \eqref{eq:main}.
Using classical splitting method (consult {\it e.g. }\cite{MQ}), we
can separate the system into two components:
the free transport part
\[\partial_tf(t,\x,\v)=-\v\cdot\grad_\x f(t,\x,\v),\]
and the flocking part
\[\partial_tf(t,\x,\v)=-\grad_\v\cdot Q(f,f).\]

The free transport part can be treated using standard methods, for
instance, WENO scheme \cite{Sh}. Note that the choice of method does
not directly affect the accuracy in $\v$. Hence, we omit the details
on this part.

\section{Numerical experiments}\label{sec:example}
In this section, we present some numerical examples to demonstrate the
good performance of the DG scheme applied to kinetic flocking models.

\subsection{Test on rate of convergence}
In this example, we
test the rate of convergence of our DG method on system
\eqref{eq:flocking}. We set
a global influence function $\phi(r)=(1+r)^{-1/2}$, and the following
smooth initial density
\[f_0(x,v)=\begin{cases}
 \exp\left(-\frac{1}{.9-x^2-v^2}\right) & \text{if } x^2+v^2<.9\\
   0& \text{otherwise}.
   \end{cases}\]
As there is no free transport, we set the computational domain
$[-1,1]\times [-1,1]$. Fix the number of partitions on $x$ to be
10. For $v$, we test on $2^{s+2}$ partitions, with
$s=1,\cdots,7$. To satisfy the CFL condition \eqref{eq:sCFL}, we
pick $\Delta t=.1\times2^{-s}$ for second order scheme, and 
$\Delta t=.04\times2^{-s}$ for third order scheme. Denote the
corresponding numerical solution be $f^{[s]}$.

To concentrate on $v$ variable, we integrate $x$ and compare the marginals
\[F^{[s]}(t,v)=\int_{-1}^1f^{[s]}(t,x,v)dx.\]

As the equation has no explicit solutions, we use $F^{[7]}$
as a reference solution. 
The $L^1$ error is computed as
\[e_s(t)=\left\|F^{[s]}(t,\cdot)-F^{[7]}(t,\cdot)\right\|_{L^1_v([-1,1])},\quad s=1,\cdots,6.\]

Table \ref{tab:rate} shows the computational convergence rates
\[r_s=-\log_2 (e_{s+1}/e_s),\quad s=1,\cdots,5\] for $t=0,.5,\cdots,3$. The numerical
results validate the desired order of convergence of the corresponding
schemes. We stop our test at time $t=3$ as the solution is already very
singular in $v$. For larger $t$, $F^{(7)}$ can not be considered as
the reference solution.

\begin{table}[h]
\centering
Second order scheme\\
\vspace{.5em}
\begin{tabular}{c|ccccccccc}
\hline
$t$ & 0 &.5 & 1 & 1.5 &2 & 2.5 & 3\\
\hline

$r_1$ &   1.6837  &  2.0844 &   1.9368  &  1.8460  &  1.2570 &   0.6842 &   0.3966 \\
$r_2$ &   2.2040  &  2.1321 &   2.2030  &  1.9350  &  1.9559 &   1.6517 &   0.9761 \\
$r_3$ &   2.0349  &  2.4708 &   2.3373  &  2.1779  &  1.9197 &   1.8891 &   1.9319 \\
$r_4$ &   1.9877  &  2.2188 &   2.4572  &  2.4522  &  2.2309 &   1.9501 &   1.7383\\
$r_5$ &   2.0554  &  2.0846 &   2.2307  &  2.4309  &  2.5247 &   2.3423 &   2.2672 \\
\hline
\end{tabular}\\
\vspace{1.5em}

Third order scheme\\ 
\vspace{.5em}
\begin{tabular}{c|ccccccccc}
\hline
$t$ & 0 &.5 & 1 & 1.5 &2 & 2.5 & 3\\
\hline
$r_1$ &    4.0841 &   3.6550 &   1.9194 &   1.8906 &   2.9130 &   1.4425 &   0.6367 \\
$r_2$ &    2.4202 &   3.6907 &   3.9546 &   3.3594 &   2.0785 &   2.1196 &   2.5912 \\
$r_3$ &    2.9890 &   2.7490 &   2.9330 &   3.4399 &   3.1831 &   2.9012 &   1.7719 \\
$r_4$ &    2.9954 &   3.0400 &   3.0960 &   3.0208 &   3.1179 &   3.5468 &   2.6046 \\
$r_5$ &    3.0052 &   3.1071 &   3.1116 &   3.0173 &   2.9973 &   3.0637 &   4.2268 \\
\hline
\end{tabular}\\ \vspace{1.5em}
\caption{Computational convergence rates for second and third order DG
schemes at different times.}\label{tab:rate}
\end{table}

\subsection{Capture flocking} 
We consider 1D full kinetic CS model \eqref{eq:main} with
initial density
\[f_0(x,v)=\chi_{|x|<1}\chi_{|v|<.5},\]
where $\chi$ is the indicator function. The influence function is set
to be the same as the previous example: $\phi(r)=(1+r)^{-1/2}$. 
As $\phi$ satisfies \eqref{eq:phi}, the solution should
converge to a flock.

We set the computational domain as follows. In $x$ direction, we
compute $D$ from \eqref{eq:D} and get $D\approx 3.98$. By symmetry,
the support of the solution in $x$ direction lies in $(-2,2)$. We set the computational
domain on $x$ to be $[-2.5,2.5]$ for safety. In $v$ direction, the
variation becomes smaller as time increases. Therefore, $[-.5,.5]$ is an
appropriate domain for $v$. We start the test with mesh size
$40\times40$.

For the time step, the CFL condition \eqref{eq:sCFL} suggests $\Delta
 t<\alpha_1/40$. So, for first and second order schemes, we take
$\Delta t=0.01$. For third order scheme, we take $\Delta t=0.004$.
\begin{figure}[h]
\includegraphics[scale=.7]{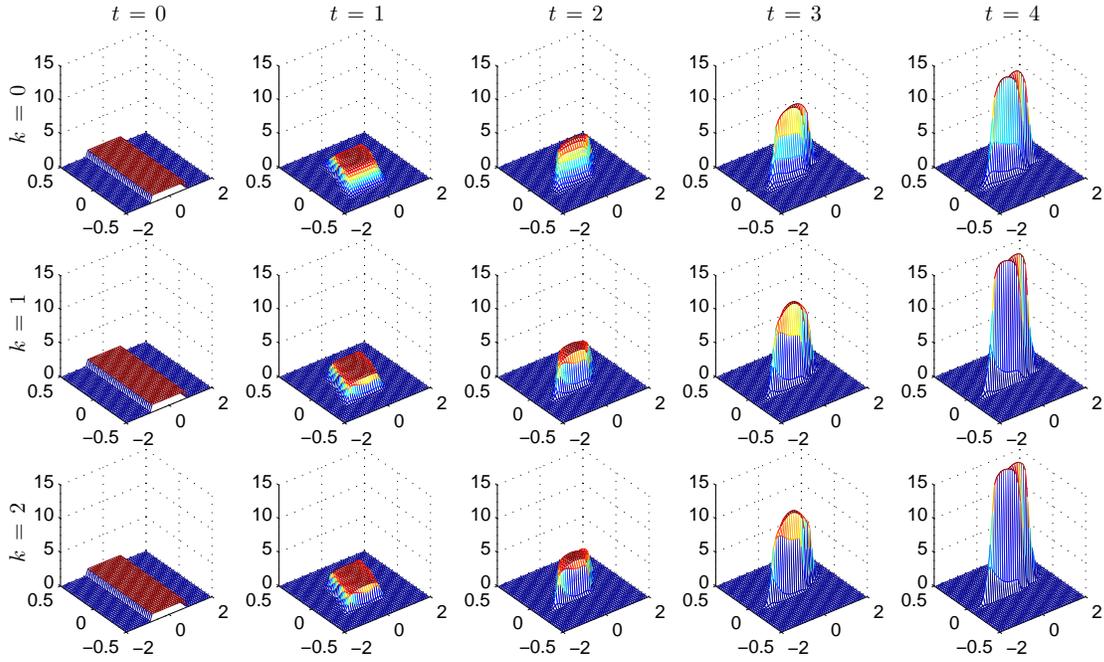}
\caption{Density $f$ at time $t=0,1,2,3,4$ for DG schemes with 
$k=0,1,2$.}\label{fig:Ep2pic1}
\end{figure}

Figure \ref{fig:Ep2pic1} shows the dynamics of density $f$ under
DG schemes using piecewise polynomials of degree $k=0,1,2$. We observe
that all three schemes converge to a flock. On the other hand, high
order schemes concentrate faster than the low order scheme, which is an
indicator of better performance. For a better view, we plot in figure \ref{fig:Ep2pic2} 
the marginal $F(t,v):=\int f(t,x,v)dx$ against $v$ at different
times. We observe that the first order scheme ($k=0$) exhibits a large
numerical diffusion, while higher order schemes are not. There is also
evidence showing third order scheme ($k=2$) is slightly better than
the second order ($k=1$). For instance, at $t=4$, the solution for the
third order scheme is higher around zero, indicating faster concentration.

\begin{figure}[h]
\includegraphics[scale=.7]{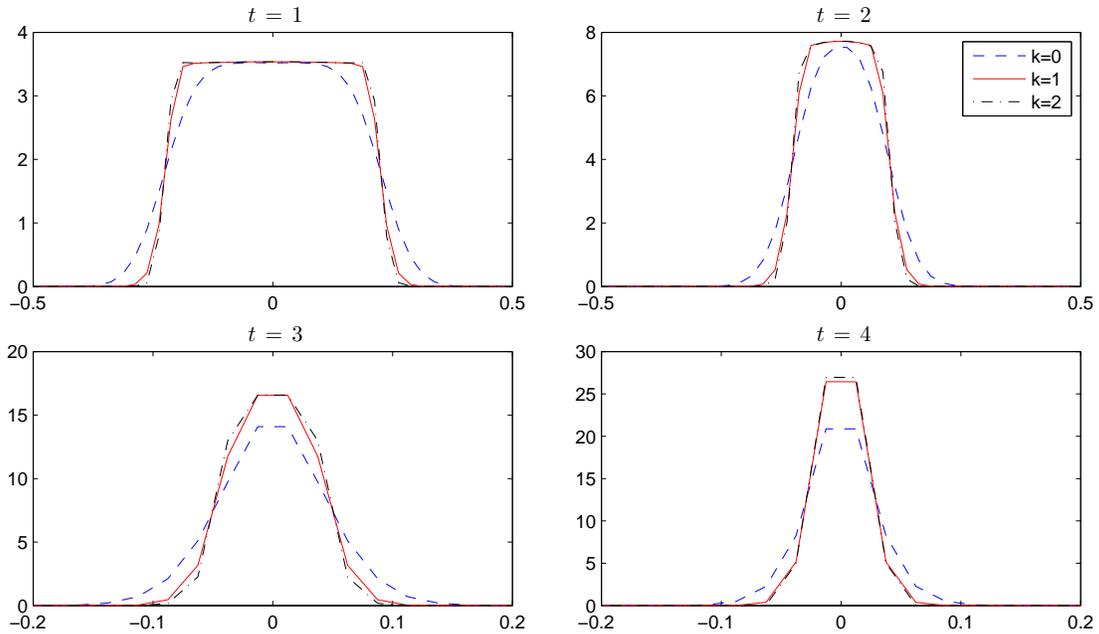}
\caption{$F(t,v)$ at time $t=1,2,3,4$ for DG schemes.}\label{fig:Ep2pic2}
\end{figure}

\subsection{Clusters vesus flocking} 
It is known that flocking is \emph{not} ganranteed if the influence
function is compactly supported, especially when \eqref{eq:phidecay}
does not hold. Multiple clusters might form as time goes. This example
is designed to compare the two asymptotic behaiviors. In fact, our DG
scheme captures both flocking and clusters very well. Let
\[f_0(x,v)=\chi_{-.5<x<-.4}\cdot\chi_{.4<v<.5}+\chi_{.4<x<.5}\cdot\chi_{-.5<v<-.4}.\]
It represents two groups, where the left group is travelling to the
right and the right group is travelling to the left. We consider two
different influence functions:
\[\phi_1(r)=\chi_{r<.8},\quad
\phi_2(r)=\chi_{r<.4}.\]
Both functions are compactly supported. Yet $\phi_1$ is much stronger
than $\phi_2$. In particular, $\phi_1(r)\geq\phi_2(r)$.

Figure \ref{fig:E3P1} shows the evolution of the CS model
under two influence functions. We observe that with strong influence
$\phi_1$, the system converges to a flock. In contrast, with
relatively weak influence $\phi_2$, the interaction is not strong
enough and multiple clusters are forming in large time.

\begin{figure}[h]
\centering
\includegraphics[scale=.7]{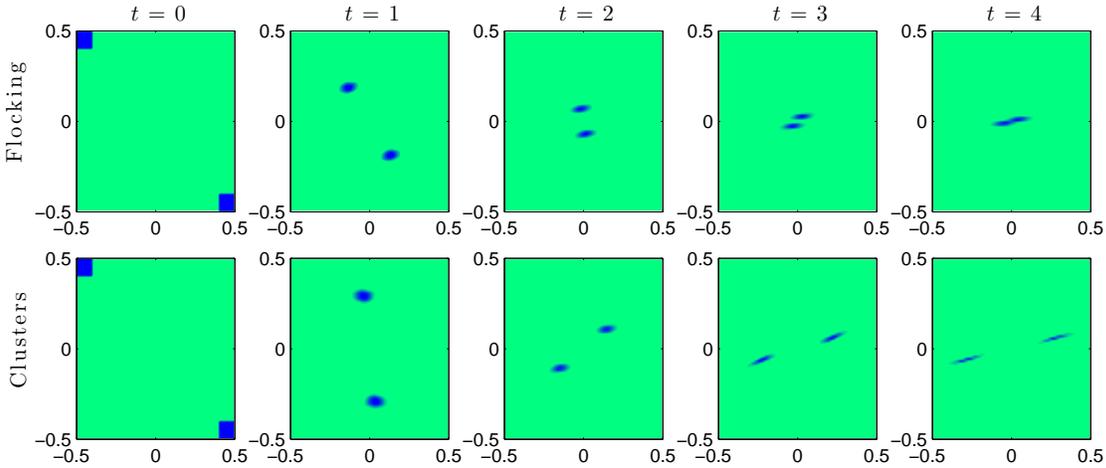}
\caption{Flocking vesus cluster formation.}\label{fig:E3P1}
\end{figure}


\subsection{Cucker-Smale vesus Motsch-Tadmor}\label{sec:vs}
We end this paper with a nice example to compare CS and MT setups
numerically.

Motsch and Tadmor in \cite{MT} discuss a drawback for CS
model which motivates their model. In the particle CS
model, \textit{``the motion of an agent is modified
by the total number of agents even if its dynamics is only influenced
by essentially a few nearby agents.''} For initial configuration far
from equilibrium, CS model has
poor performance in modeling the dynamics. The MT setup
overcomes the drawback by normalizing the influence not by the total
number of agents (or total mass), but by the total influence of each
agent.

The following example is design to compare the results of the two
setups with an initial configuration far from equilibrium. Our DG
schemes have good performances on both setups. It captures the
difference of the two models in kinetic level, which agrees with the
discussion in \cite{MT}.

Consider the initial configuration as a combination of a small group
(with mass .02) and a large
flock (with mass .98) far away
\[f_0(x,v)= \chi_{|x|<.1}\chi_{|v|<.05} + .98\delta(x-5)\delta(v-1),\]
with compactly supported influence function
$\phi(r)=(1-r)^2\chi_{r<1}$. It is easy to see that the large flock never
interact with the small group.

Figure \ref{fig:E4P1} shows numerical results of the evolutions of the
\emph{small group} in both CS and
MT setups. We observe that under CS setup, the
faraway large flock eliminates the interactions inside the small
group. The evolution is almost like a pure transform. In contrast,
MT setup yields the reasonable flocking behavior for the small group. 

\begin{figure}[h]
\centering
\includegraphics[scale=.7]{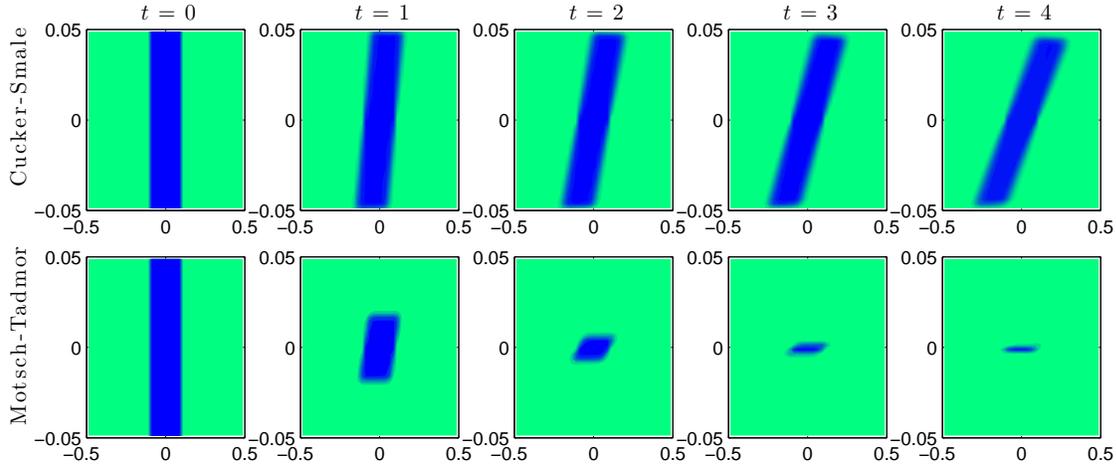}
\caption{Evolution of the small group under 2 models.}\label{fig:E4P1}
\end{figure}

\bigskip

\end{document}